\newtheorem{theorem}{Theorem}[section]
\newtheorem{lemma}[theorem]{Lemma}
\theoremstyle{definition}
\newtheorem{definition}[theorem]{Definition}
\newtheorem{example}[theorem]{Example}
\theoremstyle{remark}
\begin{document}

\title[A note on "On Green's relations, $2^{0}$-regularity and quasi-ideals in $\Gamma$-semigroups]{A note on "On Green's relations, $2^{0}$-regularity and quasi-ideals in $\Gamma$-semigroups, Acta Math. Sin., Engl. Ser., 29, No. 3, 609-624(2013)"}

\author{\textbf{Kostaq Hila \and Jani Dine}}
\address{Department of Mathematics \& Computer Science, Faculty of Natural
Sciences\\
University of Gjirokastra\\Albania} \email{kostaq\_hila@yahoo.com, jani\_dine@yahoo.com}


\subjclass[2000]{20M10, 20M12, 20M17}

\keywords{$\Gamma$-semigroup, $(m,n)^{0}$-strongly regular, $2^{0}$-strongly regular, left (right) strongly regular.}

\begin{abstract}
The aim of this note is to point out some inaccuracies in our paper \cite{HD} and to fix them. Some new notions are introduced and properties of them are investigated. 
\end{abstract}

\maketitle
\section{Introduction and preliminaries}

In our paper published recently \cite{HD}, it has been pointed out a serios gap in the proof of Lemma 2.13 and Lemma 2.14, which was based on the result obtained in Theorem 1 \cite{Y-S}. But the proof of this theorem turns out to be wrong. This affected some results obtained in the section 2 of our paper. In order to fix this, we have to modify the two lemmas and to provide the correct proof of them. To achieve this, we need to introduce some new notions and to prove some properties of them. Also, independently from us, in \cite{NK} the author pointed out this gap. 

Let we recall the definition of $\Gamma$-semigroups.

Let $M$ and $\Gamma$ be two non-empty sets. Any map from $M\times
\Gamma\times M\rightarrow M$ will be called a $\Gamma$-%
\textit{multiplication} in $M$ and denoted by $(\cdot)_{\Gamma}$. The
result of this multiplication for $a, b\in M$ and $\alpha\in \Gamma$ is
denoted by $a\alpha b$. A $\Gamma$-\textit{semigroup} $M$ is an ordered
pair $(M, (\cdot)_{\Gamma})$ where $M$ and $\Gamma$ are non-empty sets and $%
(\cdot)_{\Gamma}$ is a $\Gamma$-multiplication on $M$ which satisfies
the following property
\begin{center}
$\forall (a, b,c, \alpha, \beta)\in M^{3}\times \Gamma^{2}, (a\alpha b)\beta
c=a\alpha (b\beta c)$.
\end{center}

Other definitions and notions can be found in \cite{HD}. 

In the following, we introduce the notion of strongly regular, left (right) strongly regular $\Gamma$-semigroups and through some examples we show that the class of this kind of $\Gamma$-semigroups is not empty. 

\begin{definition} A $\Gamma$-semigroup $M$ is said to be left (right) strongly regular if for all $a\in M$ and $\gamma\in \Gamma$, there exists an element $u\in M$ such that:
\begin{center}
$a=u\gamma a\gamma a (a=a\gamma a\gamma u)$.
\end{center}
\end{definition}

\begin{definition} A $\Gamma$-semigroup $M$ is said to be strongly regular if it is both left strongly regular and right strongly regular, that is, if for all $a\in M$ and $\gamma\in \Gamma$, there exist elements $u_1, u_2\in M$ such that:
\begin{center}
$a=u_2\gamma a\gamma a=a\gamma a\gamma u_1$.
\end{center}
\end{definition}

Let we illustrate this notion by giving the following examples.
\begin{example} Let $Q$ be the set of rational numbers and $\Gamma=\{-1, 1\}$. If we define as $\Gamma$-multiplication $(\cdot)_\Gamma$ the usual product of rational numbers, then it is clear that $(Q, (\cdot)_\Gamma)$ is a $\Gamma$-semigroup. This $\Gamma$-semigroup with zero is a strongly regular $\Gamma$-semigroup. Indeed: for every nonzero rational number $a$ and for all $\gamma\in \Gamma$, if we take $u=\frac{1}{a\gamma^{2}}$, then we have
\begin{center}
$a=a\cdot \gamma\cdot a\cdot \gamma\cdot u=u\cdot \gamma\cdot a\cdot \gamma\cdot a$.
\end{center}
For $a=0$ we have: $0=0\cdot \gamma\cdot 0\cdot \gamma \cdot 1=1\cdot \gamma\cdot 0\cdot \gamma\cdot 0$.
\end{example}
\begin{example} Let $R$ be the set of real numbers and $\Gamma$ a subset of positive real numbers. If we define as $\Gamma$-multiplication $(\cdot)_\Gamma$ the usual product of real numbers, then it is clear that $(R, (\cdot)_\Gamma)$ is a $\Gamma$-semigroup and further, it is a strongly regular $\Gamma$-semigroup. Indeed: if $a=0$, then for all $\gamma\in \Gamma$ we have $0=0\cdot \gamma\cdot 0\cdot \gamma \cdot 1=1\cdot \gamma\cdot 0\cdot \gamma\cdot 0$. For $a\neq 0$, if we take $u=\frac{1}{a\gamma^{2}}$, then we have
\begin{center}
$a=a\cdot \gamma\cdot a\cdot \gamma\cdot u=u\cdot \gamma\cdot a\cdot \gamma\cdot a$.
\end{center}
\end{example}

Considering the examples above from a common point of view, we can construct a strongly regular $\Gamma$-semigroup getting started from an arbitrary group $G^{0}$ with zero. Let us take as $\Gamma$ a subset of $G$, i.e. all its elements are nonzero. If we define in $G^{0}$ as $\Gamma$-multiplication $(\cdot)_\Gamma$ the product of the group $G^{0}$, then it is clear that $(G^{0}, (\cdot)_\Gamma)$ is a $\Gamma$-semigroup. Further, it is a strongly regular $\Gamma$-semigroup. Indeed: for $a=0$, we have $0=0\cdot \gamma \cdot 0\cdot \gamma\cdot e=e\cdot \gamma\cdot 0\cdot \gamma\cdot 0$. If $a\neq 0$, we take $u=\gamma^{-1}a^{-1}\gamma^{-1}$, then we have
\begin{center}
$a=a\cdot \gamma\cdot a\cdot \gamma\cdot u=u\cdot \gamma\cdot a\cdot \gamma\cdot a$.
\end{center} 

\begin{example} Let $(S, \cdot)$ be an arbitrary semigroup which is commutative and regular. For every element $a\in S$, there exists $x\in S$ such that:
\begin{center}
$a=axa=a^{2}x=xa^{2}.$\hspace{1cm} (1)
\end{center}
Thus, the semigroup $S$ is left regular and right regular. Since $S$ is regular, then there exists at least one idempotent element $e$ in $S$, and so we have the maximal group having $e$ as identity. We denote this group by $\Gamma$. If we define in $S$ as $\Gamma$-multiplication $(\cdot)_\Gamma$ the multiplication of the semigroup $S$, then it is clear that $(S, (\cdot)_\Gamma)$ is a commutative $\Gamma$-semigroup. Further, it is a strongly regular $\Gamma$-semigroup. Indeed: for all $a\in S$ and for all $\gamma\in \Gamma$, if we take $u=\gamma^{-2}a$, according to (1), we have
\begin{center}
$a=a\cdot \gamma\cdot a\cdot \gamma\cdot u=u\cdot \gamma\cdot a\cdot \gamma\cdot a$.
\end{center} 
\end{example}

Taking into consideration the definition of left regular semigroup and right regular semigroup, we can give the following equivalent definition for strongly regular $\Gamma$-semigroups:
\begin{definition} A $\Gamma$-semigroup $(M, (\cdot)_\Gamma)$ is said to be strongly regular if for every $\gamma\in \Gamma$, the semigroup $(M,\gamma)$ (cf. \cite[pp. 611]{HD}) is left regular semigroup (the usual one) and right regular semigroup. 
\end{definition}

Let us prove now the following lemma. 
\begin{lemma} Let $M$ be a $\Gamma$-semigroup. The following are equivalent:
\begin{enumerate}
	\item $M$ is left strongly regular.
	\item $a\mathcal{L} (a\gamma a)$, $\forall a\in M, \forall \gamma\in \Gamma$.
\end{enumerate}
\end{lemma}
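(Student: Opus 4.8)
The plan is to translate both conditions into statements about the principal left ideal $L(a)=\{a\}\cup M\Gamma a$ and Green's relation $\mathcal{L}$, where $a\,\mathcal{L}\,b$ means $L(a)=L(b)$, equivalently ``$a=b$ or $a\in M\Gamma b$'' together with ``$b=a$ or $b\in M\Gamma a$''. Since $L(a)$ is itself a left ideal, $b\in L(a)$ already forces $L(b)\subseteq L(a)$. The useful observation is that, for a fixed pair $(a,\gamma)$, condition (2) reduces to the single requirement $a\in L(a\gamma a)$, because $a\gamma a\in M\Gamma a\subseteq L(a)$ holds trivially, so the inclusion $L(a\gamma a)\subseteq L(a)$ is automatic.

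For (1)$\Rightarrow$(2): given $a\in M$ and $\gamma\in\Gamma$, choose $u$ with $a=u\gamma a\gamma a$. Then $a=u\gamma(a\gamma a)\in M\Gamma(a\gamma a)$, so $L(a)\subseteq L(a\gamma a)$; together with the automatic reverse inclusion this gives $L(a)=L(a\gamma a)$, i.e.\ $a\,\mathcal{L}\,(a\gamma a)$.

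For (2)$\Rightarrow$(1): given $a\in M$ and $\gamma\in\Gamma$, condition (2) yields $a\in L(a\gamma a)=\{a\gamma a\}\cup M\Gamma(a\gamma a)$, so there are two cases. If $a=a\gamma a$, then $a\gamma a\gamma a=(a\gamma a)\gamma a=a\gamma a=a$, so $u=a$ witnesses left strong regularity for $(a,\gamma)$. If instead $a=x\delta(a\gamma a)=x\delta a\gamma a$ for some $x\in M$ and $\delta\in\Gamma$, I would substitute this identity back into the right-hand side in place of the first displayed $a$, obtaining $a=x\delta(x\delta a\gamma a)\gamma a=(x\delta x\delta a)\,\gamma a\gamma a$ by the $\Gamma$-semigroup axiom, so $u=x\delta x\delta a\in M$ works. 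In both cases $M$ is left strongly regular, completing the equivalence.

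The one point that needs care — and where a too-hasty argument stalls — is that left strong regularity demands the \emph{same} $\gamma$ in all three factor slots of $u\gamma a\gamma a$, whereas the membership $a\in M\Gamma(a\gamma a)$ extracted from $\mathcal{L}$ only supplies some connecting letter $\delta\in\Gamma$ that need not equal $\gamma$. The self-substitution step in the last case is precisely the device that resolves this: re-inserting the expression for $a$ absorbs the stray $\delta$ into the new coefficient $u$ and leaves the tail $\gamma a\gamma a$ untouched. Before writing the final proof I would also recall explicitly from \cite{HD} the conventions for $\mathcal{L}$, $L(a)$ and left ideals so that every inclusion above becomes a one-line check; I do not anticipate any further obstacle.
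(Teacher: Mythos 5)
Your proposal is correct and takes essentially the same route as the paper: both translate $\mathcal{L}$ into equality of principal left ideals $(a)_l=\{a\}\cup M\Gamma a$ and, for $(2)\Rightarrow(1)$, use exactly the same self-substitution $a=x\delta(a\gamma a)\Rightarrow a=(x\delta x\delta a)\gamma a\gamma a$ to absorb the stray letter $\delta$. Your $(1)\Rightarrow(2)$ merely streamlines the paper's element-by-element inclusion check by invoking minimality of the principal left ideal, which is a cosmetic rather than substantive difference.
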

\begin{proof} $(1)\Rightarrow (2)$. Let $M$ be strongly left regular and $a\in M$. If $x\in (a)_l$, then we have $x=a$ or $x=t\alpha a$ for some $t\in M$ and $\alpha \in \Gamma$. Since $M$ is strongly regular, $a=y\gamma (a\gamma a)$ for some $y\in M$ and for all $\gamma\in \Gamma$. 

If $x=a$, then $x=a=y\gamma (a\gamma a)$ for some $y\in M$ and for all $\gamma\in \Gamma$.

If $x=t\alpha a$, then $x=t\alpha a=t\alpha (y\gamma (a\gamma a))=(t\alpha y)\gamma (a\gamma a)=z\gamma a\gamma a$ for some $z\in M$ and for all $\gamma\in \Gamma$.

Hence $x\in (a\gamma a)_l$ for all $\gamma\in \Gamma$, thus $(a)_l\subseteq (a\gamma a)_l, \forall \gamma\in \Gamma$ (*).

Let now $a\in M$ and $\gamma\in \Gamma$. If $x\in (a\gamma a)_l$, then $x=a\gamma a$ or $x=t\alpha (a\gamma a)$ for some $t\in M$. In any case, $x=z\gamma a$ for some $z\in M$, thus $x\in (a)_l, \forall \gamma\in \Gamma$ (**). By (*) and (**) we have $(a)_l=(a\gamma a)_l, \forall a\in M, \forall \gamma\in \Gamma$. Thus we have $a\mathcal{L} (a\gamma a)$, $\forall a\in M, \forall \gamma\in \Gamma$.

$(2)\Rightarrow (1)$. Let $a\in M$. Then for all $\gamma\in \Gamma$, we have $a\mathcal{L} (a\gamma a)$. Thus we have: $a\in (a)_l=(a\gamma a)_l\Rightarrow a=a\gamma a$ or $a=x\mu (a\gamma a)$ for some $x\in M, \mu\in \Gamma$. If $a=a\gamma a$, then $a=a\gamma (a\gamma a)$. If $a=x\mu(a\gamma a)$, then $a=x\mu (x\mu a\gamma a)\gamma a=(x\mu x\mu a)\gamma (a\gamma a)=y\gamma a\gamma a$. Hence $M$ is left strongly regular.
\end{proof}

In a similar way, we can show the following lemma.
 
\begin{lemma} Let $M$ be a $\Gamma$-semigroup. The following are equivalent:
\begin{enumerate}
	\item $M$ is strongly right regular.
	\item $a\mathcal{R} (a\gamma a)$, $\forall a\in M, \forall \gamma\in \Gamma$.
\end{enumerate}
\end{lemma}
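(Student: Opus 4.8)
The plan is to mirror, almost verbatim, the argument given for the preceding lemma, with the roles of left ideals replaced by right ideals and $\mathcal{L}$ replaced by $\mathcal{R}$. Concretely, I would establish the two inclusions $(a)_r \subseteq (a\gamma a)_r$ and $(a\gamma a)_r \subseteq (a)_r$ for every $a \in M$ and every $\gamma \in \Gamma$, and conclude $(a)_r = (a\gamma a)_r$, i.e. $a \mathcal{R} (a\gamma a)$; conversely, from $a \in (a)_r = (a\gamma a)_r$ I would extract, after one substitution, the element $u$ witnessing right strong regularity.

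For $(1) \Rightarrow (2)$: assume $M$ is right strongly regular, fix $a \in M$, and take $x \in (a)_r$, so $x = a$ or $x = a\alpha t$ for some $t \in M$, $\alpha \in \Gamma$. By right strong regularity, for each $\gamma \in \Gamma$ there is $u \in M$ with $a = (a\gamma a)\gamma u$. If $x = a$, then $x = (a\gamma a)\gamma u \in (a\gamma a)_r$. If $x = a\alpha t$, then $x = ((a\gamma a)\gamma u)\alpha t = (a\gamma a)\gamma(u\alpha t) \in (a\gamma a)_r$ by the associativity axiom. Hence $(a)_r \subseteq (a\gamma a)_r$. For the reverse inclusion, if $x \in (a\gamma a)_r$ then $x = a\gamma a$ or $x = (a\gamma a)\alpha t$; in either case $x = a\gamma(\text{something})$ or $x = a\gamma(a\alpha t)$, so $x \in (a)_r$. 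Combining, $(a)_r = (a\gamma a)_r$ for all $a \in M$ and $\gamma \in \Gamma$, which is exactly $a \mathcal{R} (a\gamma a)$.

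For $(2) \Rightarrow (1)$: fix $a \in M$ and $\gamma \in \Gamma$. From $a \in (a)_r = (a\gamma a)_r$ we get $a = a\gamma a$ or $a = (a\gamma a)\mu x$ for some $x \in M$, $\mu \in \Gamma$. In the first case, $a = a\gamma a = (a\gamma a)\gamma a$, wait — more carefully, $a = a\gamma a$ gives $a = a\gamma(a\gamma a) $... actually for right regularity we want $a = (a\gamma a)\gamma u$, so from $a = a\gamma a$ substitute once more: $a = a\gamma a = a\gamma(a\gamma a) = (a\gamma a)\gamma a$, so take $u = a$. In the second case, $a = (a\gamma a)\mu x = a\gamma(a\mu x) $; substituting $a = (a\gamma a)\mu x$ into the leftmost factor, $a = (a\gamma a)\gamma((a\mu x)\mu x)$... one must be slightly careful to land a genuine right multiple of $a\gamma a$ by reassociating, paralleling the computation $a = x\mu(x\mu a\gamma a)\gamma a$ done on the left side. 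Either way $a = (a\gamma a)\gamma u$ for suitable $u \in M$, so $M$ is right strongly regular.

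The argument is entirely routine given Lemma~1.6; the only point requiring any care — and the one I would check most carefully — is the reassociation in the $x = (a\gamma a)\mu x$ subcase of $(2)\Rightarrow(1)$, making sure the final expression really has $a\gamma a$ as its leftmost $\gamma$-factor so that it exhibits $a$ as $(a\gamma a)\gamma u$; this is dual to the step $a = (x\mu x\mu a)\gamma(a\gamma a)$ in the proof of the previous lemma, and the symmetry of the $\Gamma$-associativity axiom guarantees it goes through. I would state the lemma's proof tersely as "dual to the proof of Lemma~1.6, interchanging left with right and $\mathcal{L}$ with $\mathcal{R}$," and include the one nontrivial line of reassociation explicitly.
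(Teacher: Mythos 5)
Your proposal is correct and is exactly what the paper intends: the paper gives no separate proof of this lemma, merely noting that it follows ``in a similar way'' to the left-handed Lemma 1.7, i.e.\ by the dualization you carry out (and your key identity $a=(a\gamma a)\gamma\bigl((a\mu x)\mu x\bigr)$ is the right dual of the paper's $a=(x\mu x\mu a)\gamma(a\gamma a)$). One small wording fix: in the case $a=(a\gamma a)\mu x$ the substitution should go into the occurrence of $a$ inside $a\mu x$ (equivalently the right-hand factor of $a\gamma a$), not the leftmost factor --- substituting into the leftmost factor gives $a\gamma a\mu x\gamma a\mu x$, which cannot be regrouped as $(a\gamma a)\gamma u$.
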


\begin{theorem} Let $M$ be a left strongly regular $\Gamma$-semigroup. The following statements hold:
\begin{enumerate}
	\item Every $\mathcal{L} (\mathcal{R})$-class of $M$ is a left (right) simple sub-$\Gamma$-semigroup.
	\item The $\Gamma$-semigroup $M$ is a disjoint union of left (right) simple sub-$\Gamma$-semigroups.
\end{enumerate}
\end{theorem}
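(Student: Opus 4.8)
The plan is to obtain (2) from (1), and inside (1) to prove the ``$\mathcal L$'' half only, the ``$\mathcal R$'' half being entirely dual (replace the left-handed lemma by the right-handed one, and left strong regularity by right strong regularity). Since $\mathcal L$ is an equivalence relation, $M$ is the disjoint union of its $\mathcal L$-classes; hence once each $\mathcal L$-class is shown to be a left simple sub-$\Gamma$-semigroup, (2) follows at once. So fix an $\mathcal L$-class $L=L_a$. I will use throughout that left strong regularity gives $a=(u\gamma a)\gamma a\in M\Gamma a$ for all $a$ and $\gamma$, so that $(a)_l=M\Gamma a$ and $a\mathcal L b$ means exactly $M\Gamma a=M\Gamma b$.

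I would first show that $L$ is a sub-$\Gamma$-semigroup. The key auxiliary point is that $\mathcal L$ is a right congruence: if $b\mathcal L c$ then $b=c$ or $b=m\alpha c$ for some $m\in M$, $\alpha\in\Gamma$, and in the latter case $b\gamma x=m\alpha(c\gamma x)\in M\Gamma(c\gamma x)$, so $(b\gamma x)_l\subseteq(c\gamma x)_l$; by symmetry $b\gamma x\mathcal L c\gamma x$ for all $x\in M$, $\gamma\in\Gamma$. Now take $b,c\in L$ and $\gamma\in\Gamma$. From $b\mathcal L c$, right multiplication by $\gamma c$ yields $b\gamma c\mathcal L c\gamma c$; the first of the two Lemmas above yields $c\gamma c\mathcal L c$; and $c\mathcal L a$. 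Transitivity gives $b\gamma c\mathcal L a$, i.e.\ $b\gamma c\in L$, so $L\Gamma L\subseteq L$.

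Next I would show $L$ is left simple, for which it suffices to prove $L\Gamma x=L$ for every $x\in L$ (this characterises left simplicity of a $\Gamma$-semigroup, and $L\Gamma x\subseteq L$ is the closure just established). Fix $x\in L$ and let $y\in L$; since $y\mathcal L x$ we have $y\in M\Gamma x$, say $y=m\gamma x$. The point is to replace the coefficient $m$ by an element of $L$. Applying left strong regularity to $x$ and $\gamma$ we may write $x=w\gamma x\gamma x$, whence $y=m\gamma x=(m\gamma w\gamma x)\gamma x=d\gamma x$ with $d:=m\gamma w\gamma x\in M\Gamma x$; in particular $(d)_l\subseteq(x)_l$. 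If one can also check $x\in M\Gamma d$, then $d\mathcal L x$, so $d\in L$, and $y=d\gamma x\in L\Gamma x$; that completes (1), and then (2) as explained above.

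The main obstacle is exactly this last inclusion $x\in M\Gamma d$. The relation $y=d\gamma x$ only puts $d$ to the \emph{left} of $x$, so on its own it controls right ideals rather than left ideals, and a single substitution does not suffice. The way I intend to push it through is to combine the identities $x=(w\gamma x)\gamma x$, $x\mathcal L x\gamma x$, $y=d\gamma x$ and $y\mathcal L x$ iteratively: one first extracts from them an element $e\in M\Gamma x$ with $e\gamma x=x$, and then, by the same kind of bookkeeping (applying the first Lemma to $x$ and to $e$), shows that every such $e$ satisfies $x\in M\Gamma e$; taking $e$ built out of $d$ then gives $x\in M\Gamma d$. I expect this control of the side on which coefficients sit, with only \emph{left}-sided strong regularity at hand, to be the genuinely delicate part of the proof; the right-congruence computation, the reduction of (2) to (1), and the mirror-image ``right'' statements should all be routine.
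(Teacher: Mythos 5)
Your reduction of (2) to (1), the right-congruence argument showing each $\mathcal L$-class is a sub-$\Gamma$-semigroup, and the appeal to duality all match the paper's proof. But the gap is exactly where you locate it, and what you sketch does not close it. With your choice $d=m\gamma w\gamma x$ you get $(d)_l\subseteq(x)_l$ for free, while the reverse inclusion $x\in M\Gamma d$ does not follow from $y=d\gamma x$; the identities available to your iteration only produce statements of the form $x=n\delta d\gamma x$, i.e.\ $x\in M\Gamma d\Gamma x$, which reproduce themselves under substitution without ever landing in $M\Gamma d$. So the ``genuinely delicate part'' is a missing idea, not deferred bookkeeping, and there is no reason to expect your particular $d$ to lie in $L$ at all.

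The paper closes the gap with a different choice of coefficient and a different application of left strong regularity. In your notation: since $L$ is already known to be a sub-$\Gamma$-semigroup, $y\gamma x\in L$, so $y\,\mathcal L\,(y\gamma x)$ and hence $y=m\alpha(y\gamma x)$ for some $m\in M$, $\alpha\in\Gamma$ (using $(z)_l=M\Gamma z$, which you established). Put $d:=m\alpha y$. Then $d\gamma x=m\alpha(y\gamma x)=y$, and $d\in M\Gamma y$ gives $(d)_l\subseteq(y)_l$. For the reverse inclusion apply left strong regularity to the \emph{coefficient} $m$, not to $x$: writing $m=n\alpha m\alpha m$,
\[
y=(n\alpha m\alpha m)\alpha(y\gamma x)=n\alpha\bigl(m\alpha(m\alpha(y\gamma x))\bigr)=n\alpha(m\alpha y)=n\alpha d,
\]
so $y\in M\Gamma d$, whence $d\,\mathcal L\,y\,\mathcal L\,x$, $d\in L$, and $y=d\gamma x\in L\Gamma x$. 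The trick is that ``doubling'' $m$ lets one copy of $m\alpha$ be absorbed back into the identity $y=m\alpha(y\gamma x)$, leaving the other copy attached to $y$ as the new coefficient $d\in M\Gamma y$. Your construction misses this for two reasons: you regularized the base point $x$ instead of the coefficient, and you started from the weaker datum $y\in M\Gamma x$ rather than from $y\in M\Gamma(y\gamma x)$, which is where part (1)'s closure property has to be fed back into the left-simplicity argument.
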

\begin{proof} We will make the proof only for the $\mathcal{L}$-classes and left simple sub-$\Gamma$-semigroups. The proof for the $\mathcal{R}$-classes and right simple sub-$\Gamma$-semigroups is similar.

(1). By Lemma 1.7, since $M$ is left strongly regular, for all $a\in M$ and for all $\gamma\in \Gamma$, we have $a\mathcal{L}(a\gamma a)$. Let $a\mathcal{L} b$, so $b\in L_a$. Since $\mathcal{L}$ is a right congruence, then $a\mathcal{L} b$ implies $a\gamma a\mathcal{L}b\gamma a$ for all $\gamma\in \Gamma$. Thus, since $a\mathcal{L} (a\gamma a)$ for all $\gamma\in \Gamma$ and $\mathcal{L}$ is an equivalence relation, we have $b\gamma a\mathcal{L} a$. So, $b\gamma a\in L_a$ for all $\gamma\in \Gamma$, and therefore $L_a$ is a sub-$\Gamma$-semigroup of $M$. Let us prove now that for all $a\in M$, the sub-$\Gamma$-semigroup $L_a$ is left simple. We have to prove that for all $b\in L_a$, there exist $c\in L_a$ and $\gamma\in \Gamma$ such that $b=c\gamma a$. Since $L_a$ is a sub-$\Gamma$-semigroup of $M$, for all $\gamma\in \Gamma$, $b\gamma a\in L_a$. That is, there exists $x\in M$ and $\gamma_1\in \Gamma$, such that $b=x\gamma_1 b\gamma a$. Denote $c=x\gamma_1 b$ and so we have $(c)_l\subseteq (b)_l$. Since $M$ is a left strongly regular $\Gamma$-semigroup, for the element $x\in M$ and $\gamma_1\in \Gamma$, there exists $y\in M$ such that $x=y\gamma_1 x\gamma_1 x$. Thus we have:
\begin{center}
$b=(y\gamma_1 x)\gamma_1 (x\gamma_1 b\gamma a)=(y\gamma_1 x)\gamma_1 b=y\gamma_1(x\gamma_1 b)=y\gamma_1 c$
\end{center}
 which shows that $(b)_l\subseteq (c)_l$. So, $(c)_l=(b)_l$. Therefore, $c\in L_b=L_a$ and $b=c\gamma a$.
 
(2). It is an immediate corollary of (1), since the $\mathcal{L}$-classes are disjoint and their union is equal to $M$.
\end{proof}

We know that for the plain semigroups, each of the above proposition (1), (2) implies the semigroup is left (right) regular. We have our doubts about the case of $\Gamma$-semigroups. Thus the following problem arises:

\textbf{Problem 1.} What can we say about the converse of the above theorem? If it is not true, can we find counterexamples showing that each of the proposition (1) or (2) doesn't imply the fact that $\Gamma$-semigroup $M$ is left (right) strongly regular?

Let $M$ be a strongly regular $\Gamma$-semigroup. By the Definition 1.2, $M$ is at the same time a left strongly regular and a right strongly regular. By the Theorem 1.9, $M$ is a disjoint union of left simple sub-$\Gamma$-semigroups and at the same time a disjoint union of right simple sub-$\Gamma$-semigroups. Since the $\Gamma$-semigroups, which are at the same time left simple and right simple, are $\Gamma$-groups, a natural question is posed: What can we say about the strongly regular $\Gamma$-semigroups, are they a union of $\Gamma$-groups? The positive answer of it is given by the following theorem:
\begin{theorem} Let $M$ be a $\Gamma$-semigroup. The following statements are equivalent:
\begin{enumerate}
	\item $M$ is a union of $\Gamma$-groups.
	\item $M$ is a strongly regular $\Gamma$-semigroup.
	\item Every $\mathcal{H}$-class of $M$ is a $\Gamma$-group.
	\item $M$ is a disjoint union of $\Gamma$-groups.
\end{enumerate}
\end{theorem}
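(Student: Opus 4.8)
The plan is to establish the cycle $(4)\Rightarrow(1)\Rightarrow(2)\Rightarrow(3)\Rightarrow(4)$, where the first and last steps are immediate: $(4)\Rightarrow(1)$ because a disjoint union is a union, and $(3)\Rightarrow(4)$ because the $\mathcal{H}$-classes of $M$ are pairwise disjoint and cover $M$, so $(3)$ already displays $M$ as a disjoint union of $\Gamma$-groups. For $(1)\Rightarrow(2)$, given $a\in M$ and $\gamma\in\Gamma$ I would take a sub-$\Gamma$-semigroup $G\ni a$ of $M$ that is a $\Gamma$-group; since $a\gamma a\in G$, the fact that $G$ is a $\Gamma$-group (see \cite{HD}) makes $x\gamma(a\gamma a)=a$ and $(a\gamma a)\gamma y=a$ solvable inside $G\subseteq M$, producing $u_{2},u_{1}\in M$ with $a=u_{2}\gamma a\gamma a=a\gamma a\gamma u_{1}$; hence $M$ is strongly regular.

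The core is $(2)\Rightarrow(3)$. If $M$ is strongly regular it is left and right strongly regular, so by Lemmas 1.7 and 1.8 we get $a\,\mathcal{L}\,(a\gamma a)$ and $a\,\mathcal{R}\,(a\gamma a)$, hence $a\,\mathcal{H}\,(a\gamma a)$, for all $a\in M$, $\gamma\in\Gamma$. I fix an $\mathcal{H}$-class $H=H_{a}$ and argue in three stages. (i) $H$ is a sub-$\Gamma$-semigroup: for $b,c\in H$, since $\mathcal{L}$ is a right congruence, $b\gamma c\,\mathcal{L}\,c\gamma c\,\mathcal{L}\,c$, so $b\gamma c\in L_{a}$; dually, since $\mathcal{R}$ is a left congruence, $b\gamma c\,\mathcal{R}\,b\gamma b\,\mathcal{R}\,b$, so $b\gamma c\in R_{a}$, whence $b\gamma c\in L_{a}\cap R_{a}=H$. (ii) For each $\gamma\in\Gamma$ I build a $\gamma$-identity of $H$: picking $u_{1},u_{2}$ with $a=u_{2}\gamma a\gamma a=a\gamma a\gamma u_{1}$ and putting $e:=u_{2}\gamma a$, associativity yields $e=a\gamma u_{1}$, $e\gamma e=e$, $e\gamma a=a=a\gamma e$; moreover $e=u_{2}\gamma a\in(a)_{l}$ and $a=a\gamma e\in(e)_{l}$ force $(a)_{l}=(e)_{l}$, i.e.\ $a\,\mathcal{L}\,e$, and dually $a\,\mathcal{R}\,e$, so $e\in H$. (iii) $(H,\gamma)$ is a group: $e$ is a two-sided identity on $H$, since for $b\in H$ either $b=e$ or $b=e\delta m$ (from $b\in(e)_{r}$), and in both cases $e\gamma b=b$ using $e\gamma e=e$, and symmetrically $b\gamma e=b$; and $b$ has a $\gamma$-inverse in $H$, found by writing $e=p\mu b$ (from $b\in(e)_{l}$), rewriting it as $(p\mu e)\gamma b=e$ via $b=e\gamma b$, and observing that the left $\gamma$-inverse $b':=p\mu e$ and the dually-built right $\gamma$-inverse coincide; then $b'=p\mu e\in(e)_{l}$ together with $e=b\gamma b'\in(b')_{l}$ gives $b'\,\mathcal{L}\,e$, dually $b'\,\mathcal{R}\,e$, so $b'\in H$. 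As $\gamma$ was arbitrary, $H$ is a $\Gamma$-group, which is $(3)$.

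The step I expect to be the real obstacle is the last one: showing that an $\mathcal{H}$-class is genuinely a $\Gamma$-group and not just a sub-$\Gamma$-semigroup with some regularity. The difficulty is structural — a $\Gamma$-semigroup carries one operation per $\gamma\in\Gamma$, and in a $\Gamma$-group the identity and the inverse depend on $\gamma$, so the classical proof of ``completely regular $\Rightarrow$ union of groups'' cannot be transcribed directly; one has to run the identity/inverse construction for each $\gamma$ and use that $\mathcal{L}$ is a right congruence and $\mathcal{R}$ a left congruence to keep the construction inside the fixed $\mathcal{H}$-class (the subtle point being that the inverse $b'$ stays $\mathcal{H}$-related to $a$). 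A neater alternative would be to first prove a $\Gamma$-version of Green's theorem — any $\mathcal{H}$-class $H$ with $H\Gamma H\cap H\neq\emptyset$ is a $\Gamma$-group — after which $a\,\mathcal{H}\,(a\gamma a)$ alone gives $(2)\Rightarrow(3)$; I would lean toward developing this lemma, since it also clarifies how Theorem 1.9 fits in.
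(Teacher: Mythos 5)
Your proof is correct, and its skeleton (the trivial implications $(3)\Rightarrow(4)\Rightarrow(1)$, the equation-solving argument for $(1)\Rightarrow(2)$, and the reduction of $(2)\Rightarrow(3)$ to $a\,\mathcal{H}\,(a\gamma a)$ via Lemmas 1.7 and 1.8) is exactly the paper's. The one genuine difference is how $(2)\Rightarrow(3)$ is finished: the paper simply invokes the Green's Theorem for $\Gamma$-semigroups, already available as Theorem 2.2 of \cite{HD}, to pass from $a\,\mathcal{H}\,(a\gamma a)$ to ``$H_a$ is a $\Gamma$-group'' --- precisely the ``neater alternative'' you describe at the end --- whereas you reprove that step by hand: closure of $H$ under each $\gamma$ using that $\mathcal{L}$ is a right and $\mathcal{R}$ a left congruence, the $\gamma$-idempotent $e=u_2\gamma a=a\gamma u_1$ shown to lie in $H$ and to act as a two-sided identity on all of $H$, and the two-sided $\gamma$-inverse $b'$ shown to lie in $H$. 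That construction checks out and is a worthwhile self-contained verification of the Green-type lemma in this special case, at the cost of length; the paper's route buys brevity but leans on the earlier theorem whose $\Gamma$-analogue your argument effectively re-establishes. Two cosmetic points to tidy up: in the inverse construction the relation you use is $e\in(b)_l$ (equivalently $e\in(b)_r$), not $b\in(e)_l$, and you should dispose of the degenerate cases $e=b$ (where $b$ is its own $\gamma$-inverse since $e\gamma e=e$) and $b=e$ in the identity check; neither affects the argument.
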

\begin{proof} $(1)\Rightarrow (2)$. Let us assume that (1) holds. For every element $a\in M$, there exists a $\Gamma$-group $G$ of the union such that $a\in G$. Then, for every $\gamma\in \Gamma$, $a\gamma a\in G_\gamma=(G, \gamma)$ and therefore, there exist an element $u\in G$ such that $a=u\gamma a\gamma a$. Thus, $M$ is a right strongly regular $\Gamma$-semigroup. In similar way, it can be shown that $M$ is a left strongly regular $\Gamma$-semigroup. Hence, $M$ is a strongly regular $\Gamma$-semigroup.

$(2)\Rightarrow (3)$. Let us assume that $M$ is a strongly regular $\Gamma$-semigroup. Thus, for every $a\in M$ and $\gamma\in \Gamma$, there exist $u_1, u_2\in M$ such that $a=a\gamma a\gamma u_1=u_2\gamma a\gamma a$. This implies that $a\mathcal{H}(a\gamma a)$. By the Green's Theorem for $\Gamma$-semigroup \cite[Theorem 2.2]{HD}, the $\mathcal{H}$-class $H_a$ is a $\Gamma$-group. 

$(3)\Rightarrow (4)$. Let us assume that (3) holds. For every element $a\in M$, the $\mathcal{H}$-class $H_a$ is a $\Gamma$-group. Since $\mathcal{H}$ is an equivalence relation, we have
\begin{center}
$M=\bigcup\limits_{a\in M} H_a$ and $\forall (a,b)\in M^{2}, (H_a=H_b)\vee (H_a\cap H_b=\emptyset)$.
\end{center}

$(4)\Rightarrow (1)$. It is obvious.
\end{proof}

By the Theorem 1.10 it follows that every strongly regular $\Gamma$-semigroup $M$ is regular. Indeed: for every element $a\in M$, there exists a $\Gamma$-group $G$ such that $a\in G$ and so we have $a\in a\Gamma G\Gamma a\subseteq a\Gamma M\Gamma a$.

The converse is not true in general, as we know from the algebraic theory of semigroups, there are regular semigroups which are not union of groups and therefore, by Theorem 1.10, they are not strongly regular $\Gamma$-semigroups. These arguments justify to some extent the term strongly regular $\Gamma$-semigroup.

If $M$ is a strongly regular $\Gamma$-semigroup with zero, 0, then $\mathcal{H}$-class of the element zero contains only the zero element, while every other class of a nonzero representative, does not contain the zero element. But at the same time, it is a $\Gamma$-group with zero the element 0. Thus, every $\Gamma$-semigroup with zero is a union of $\Gamma$-groups with zero.

Let $M$ be a strongly regular $\Gamma$-semigroup without zero and $a$ be an arbitrary element of $M$. Since $M$ is a union of $\Gamma$-groups, there exists a $\Gamma$-group $G$ of this union containing $a$. Then for every $\gamma_1\in \Gamma$, we have $a, a\gamma_1 a\in G_{\gamma_2}$, where $\gamma_2$ is an arbitrary element of $\Gamma$. Therefore, there are $u, u_1, u_2\in M$ such that
\begin{center}
$a=u_1\gamma_1 a\gamma_2 a=a\gamma_2 a\gamma_1 u_2$.\hspace{2cm} (*) 
\end{center}

Conversely, it is obvious that, if for every $a\in M$ and for every $\gamma_1, \gamma_2\in \Gamma$, there exist $u_1, u_2\in M$ such that (*) holds, then $M$ is a strongly regular $\Gamma$-semigroup.

Thus, we may give the following new equivalent definition.
\begin{definition} A $\Gamma$-semigroup $M$ is said to be strongly regular if 
\begin{center}
$\forall (a, \gamma_1, \gamma_2)\in M\times \Gamma^{2}, \exists (u_1, u_2)\in M^{2}, a=u_1\gamma_1 a\gamma_2 a=a\gamma_2 a\gamma_1 u_2$.
\end{center}
\end{definition}

According to the Definition 1.6, a $\Gamma$-semigroup $M$ is strongly regular when for every $\gamma\in \Gamma$, the semigroup $M_\gamma=(M, \gamma)$ is left regular and right regular. A natural question is posed: The same as for $\Gamma$-groups, if for an element $\gamma\in \Gamma$, the semigroup $(M, \gamma)$ is left regular and right regular, is the semigroup $(M, \gamma)$ a left regular and right regular semigroup for every $\gamma\in \Gamma$? Thus the following problems arise:

\textbf{Problem 2.} If $M$ is a $\Gamma$-semigroup such that for an element $\gamma\in \Gamma$, the semigroup $(M, \gamma)$ is a left and a right regular semigroup, is $M$ a strongly regular $\Gamma$-semigroup? 

\textbf{Problem 3.} If $M$ is a regular and right (left) strongly regular $\Gamma$-semigroup, is it a strongly regular $\Gamma$-semigroup?


\section {$2^{0}$-strongly regularity}

In this section, we introduce the definition of $(m ,n)^{0}$-strongly regularity in $\Gamma$-semigroups in general and that of $2^{0}$-strongly regular in particular. 
\begin{definition} Let $m$ and $n$ be nonnegative integers with $m+n>1$. A $\Gamma$-semigroup $M$ will be in the class of strongly $(m,n)^{0}-\Gamma$-semigroups, written $M\in s(m,n)^{0}$, if and only if for every $x\in M$ one of the following holds
\begin{enumerate}
	\item $m>0$ and $x\alpha x....\alpha x=0$,
	\item $n>0$ and $x\beta x....\beta x=0$,
	\item $x=x\alpha x....\alpha x\alpha u\beta x\beta x....\beta x$ for some $u\in M$ and $\forall\alpha, \beta\in \Gamma$ where $x^{0}$ is suppressed in the equation when necessary.
\end{enumerate}	
We will say that $M$ is $(m,n)^{0}$-\textit{strongly regular} whenever $M\in s(m,n)^{0}$ and that $M$ is $n^{0}$-\textit{strongly regular} when $M\in s(n,n)^{0}$.
\end{definition}

It is clear that $(m,n)\subseteq (m,n)^{0}$. Indeed, if $M$ is a $\Gamma$-semigroup with no nilpotent elements (other than perhaps 0) we have $M\in s(m,n)$ if and only if $M\in s(m,n)^{0}$.\\

As an extension and generalization of Croisot's (2,2)-regular semigroup, we give the following definition.
\begin{definition} A $\Gamma$-semigroup $M$ with 0 is said to be $2^{0}$-\textit{strongly regular} whenever for each $x\in M$ either $x\gamma x=0$ for some $\gamma\in \Gamma$ or $x\in x\gamma x\gamma M\gamma x\gamma x$ for all $\gamma\in \Gamma$. 
\end{definition}

It is clear that the $2^{0}$-strongly regular $\Gamma$-semigroups are $2^{0}$-regular. It is also clear that the $\Gamma$-semigroups $M$ such that $M\in (1,1)$ are regular $\Gamma$-semigroups. For analogy, we have:
\begin{definition} A left strongly regular $\Gamma$-semigroup is a $\Gamma$-semigroup $M$ such that $M\in s(0,2)$, that is, for all $x\in M$ and $\alpha \in \Gamma$, there exists $u\in M$, such that $x=u\alpha x\alpha x$.
\end{definition}
\begin{definition} A right strongly regular $\Gamma$-semigroup is a $\Gamma$-semigroup $M$ such that $M\in s(2,0)$, that is, for all $x\in M$ and $\alpha \in \Gamma$, there exists $u\in M$ such that $x=x\alpha x\alpha u$.
\end{definition}

Using all above, we fix now all the inaccuracies occured mainly in section 2 \cite{HD}. 

The Lemma 2.13 \cite{HD} has to be stated in the form of Lemma 1.7.

The Lemma 2.14 \cite{HD} has to be stated in the form of Lemma 1.8.

The Theorem 2.15 \cite{HD} has to be stated in the following form:\\
\textbf{Theorem 2.15.} \textit{A $\Gamma$-semigroup $M$ is a union of $\Gamma$-groups if and only if it is at the same time a left and right strongly regular $\Gamma$-semigroup}. \\
\begin{proof} It is clear from the Theorem 1.10.
\end{proof}

The Corollary 2.16 \cite{HD} has to be stated in the following form:\\
\textbf{Corollary 2.16.} \textit{Let $M$ be a $\Gamma$-semigroup. The following statements are equivalent:}
\begin{enumerate}
	\item $M\in s(2,2)$
	\item $M\in s(2,0)\cap s(0,2)$.
	\item $M$ \textit{is a union of $\Gamma$-groups.}
\end{enumerate}
\begin{proof} It follows easily from Theorem 2.15.
\end{proof}

The Proposition 2.17 \cite{HD} has to be stated in the following form:\\
\textbf{Proposition 2.17.} $s(n,n)^{0}=s(0,n)^{0}\cap s(n,0)^{0}$ for $n\geq 2$.\\
\begin{proof} See Theorem 2.15.
\end{proof}

The Definition 2.18 \cite{HD} has to be stated in the following form:\\
\textbf{Definition 2.18} (1). A $\Gamma$-semigroup $M$ is said to be \textit{left-0-strongly regular} if $M\in s(0,2)^{0}$.

(2). A $\Gamma$-semigroup $M$ is said to be \textit{right-0-strongly regular} if $M\in s(2,0)^{0}$.

(3). A $\Gamma$-semigroup $M$ is said to be \textit{intra-0-strongly regular} if for each $x\in M$ either $x\gamma x=0$ for some $\gamma \in \Gamma$ or $x=u\alpha x\alpha x\alpha v$ for some $u, v\in M$ and $\forall \alpha\in \Gamma$.

The Proposition 2.20 \cite{HD} has to be stated in the following form:\\
\textbf{Proposition 2.20} Let $M$ be a $\Gamma$-semigroup with 0. The following statements are equivalent:
\begin{enumerate}
	\item $M$ is [left, right] intra-0-strongly regular.
	\item $M$ is [left, right] 0-semiprime.
	\item If $x\in M$ and $x\gamma x\neq 0, \forall \gamma \in \Gamma$, then $[x\mathcal{L} x\gamma x, x\mathcal{R} x\gamma x]      x\mathcal{J} x\gamma x, \forall \gamma\in \Gamma$.
	\item If $x\in M$ and $x\gamma x\neq 0, \forall \gamma \in \Gamma$, then $[x\in M\Gamma x\gamma x, x\in x\gamma x\Gamma M]      x\in M\Gamma x\gamma x\Gamma M, \forall \gamma \in \Gamma$.
\end{enumerate}

The Proposition 2.21 \cite{HD} has to be stated in the following form:\\
\textbf{Proposition 2.21}. \textit{Let $M$ be a $\Gamma$-semigroup with 0. All left, right and two-sided ideals of $M$ are 0-semiprime if and only if $M$ is $2^{0}$-strongly regular}.

The Theorem 2.22 \cite{HD} has to be stated in the following form:\\
\textbf{Theorem 2.22} \textit{Let $M$ be a $\Gamma$-semigroup with 0. $M$ is $2^{0}$-strongly regular if and only if for all $x\in M$ and $\gamma\in \Gamma$, $x\gamma x=0$ or $x\gamma x\in H_{x}$}.\\
\begin{proof} Assume $M$ is $2^{0}$-strongly regular, that is $M\in s(2,2)$. Let $x\in M$ and $\gamma\in \Gamma$. Let us suppose that $x\gamma x\neq 0$. By Corollary 2.16 and Theorem 2.15 it follows that $M$ is a left and right strongly regular $\Gamma$-semigroup. By Lemmas 1.7 and 1.8 it follows that $x\mathcal{H}(x\gamma x)$, that is $x\gamma x\in H_{x}$.   

Conversely, suppose that for all $x\in M$ and $\gamma\in \Gamma$, $x\gamma x=0$ or $x\gamma x\in H_{x}$. In the former case the first part of Definition 2.2 is satisfied; in the later case by the Theorem 2.2, $H_{x}$ is subgroup of $M_{\gamma}$ and so, the equation $x=x\gamma x\gamma u\gamma x\gamma x$ is the solvable for $u$ in $H_{x}$. Thus in either case $M$ is $2^{0}$-strongly regular.
\end{proof}

The Corollary 2.23 \cite{HD} has to be stated in the following form:\\
\textbf{Corollary 2.23} \textit{Let $M$ be a $2^{0}$-strongly regular $\Gamma$-semigroup. Then all the irregular elements of $M$ lie in $\mathcal{D}$-classes, $D$, such that $D\Gamma D=\{0\}$}.\\
\begin{proof} See the proof in \cite{HD} replacing $2^{0}$-regular with $2^{0}$-strongly regular.
\end{proof}

The Theorem 2.24 \cite{HD} has to be stated in the following form:\\
\textbf{Theorem 2.24} \textit{Let $M$ be a $2^{0}$-strongly regular $\Gamma$-semigroup and suppose $D$ is a nonzero regular $\mathcal{D}$-class union $\{0\}$. Then $D$ is itself a completely 0-simple $\Gamma$-semigroup}.\\
\begin{proof} See the proof in \cite{HD} replacing $2^{0}$-regular with $2^{0}$-strongly regular.
\end{proof}

The Theorem 2.25 \cite{HD} has to be stated in the following form:\\
\textbf{Theorem 2.25} Let $M$ be a regular $\Gamma$-semigroup with 0. Then the following are equivalent:
\begin{enumerate}
	\item $M$ is $2^{0}$-strongly regular.
	\item $M$ is the $0$-disjoint union of sub-$\Gamma$-semigroups which are themselves completely 0-simple $\Gamma$-semigroups.
	\item $M$ is left 0-strongly regular and right 0-strongly regular.
	\item All left and right ideals of $M$ are 0-semiprime.
\end{enumerate}
\begin{proof} See the proof in \cite{HD} replacing $2^{0}$-regular with $2^{0}$-strongly regular.
\end{proof}

The Proposition 3.7 \cite{HD} has to be stated in the following form:\\
\textbf{Proposition 3.7} If $M$ is a $\Gamma$-semigroup with 0 such that each $\mathcal{H}$-class union $\{0\}$ is a quasi-ideal, then $M$ is $2^{0}$-strongly regular.\\
\begin{proof} Let $x\in M$ and let $H=H^{0}_{x}$. Then $H\Gamma H\subseteq M\Gamma H\cap H\Gamma M\subseteq H$ since $H$ is a quasi-ideal. Thus either $x\gamma x=0$ for some $\gamma \in \Gamma$ or $x\gamma x\in H$ for all $\gamma \in \Gamma$. It follows from Theorem 2.22 that $M$ is $2^{0}$-strongly regular.
\end{proof}

The Theorem 4.5 \cite{HD} has to be stated in the following form:\\
\textbf{Theorem 4.5} A $\Gamma$-semigroup $M$ with 0 is absorbent if and only if it is $2^{0}$-strongly regular and the collection of its $\mathcal{D}$-classes union $\{0\}$ is mutually annihilating.\\
\begin{proof} See the proof in \cite{HD} replacing $2^{0}$-regular with $2^{0}$-strongly regular.
\end{proof}


\begin{thebibliography}{99}

\bibitem{HD}
K. Hila, J. Dine {\em On Green's relations, $2^{0}$-regularity and quasi-ideals in $\Gamma$-semigroups}, Acta Math. Sin., Engl. Ser. {\bf 29} (2013), No. 3, 609-624.
\bibitem{Y-S}
Y. I. Kwon, S. K. Lee {\em On the left regular po-$\Gamma$-semigroups}, Kangweon-Kyungki Math. Jour. {\bf 6}(1998), No. 2, 149-154.
\bibitem{NK} N. Kehayopulu {\em On the paper "On Green's relations, $2^{0}$-regularity and quasi-ideals in $\Gamma$-semigroups"}, arXiv:1401.7531v1.

\end{thebibliography}
\end{document}